\theoremstyle{plain}
\newtheorem{theorem}{Theorem}[section]
\newtheorem{proposition}[theorem]{Proposition}
\theoremstyle{definition}
\newtheorem{definition}[theorem]{Definition}
\theoremstyle{remark}
\newtheorem{example}[theorem]{Example}
\newtheorem*{remark}{Remark}
\title{Faithful Decomposition of Rationals}
\author[1,2]{Sunben Chiu}
\author[1]{Pingzhi Yuan}
\author[,3]{Hongjian Li\thanks{Corresponding author. Email: lhj@gdufs.edu.cn.}}
\affil[1]{School of Mathematical Sciences, South China Normal University, Guangzhou, 510631, P. R. China}
\affil[2]{Department of Technology, ICBC Software Development Center, Guangzhou 510665, P. R. China}
\affil[3]{School of Mathematics and Statistics, Guangdong University of Foreign Studies, Guangzhou 510006, P. R. China}
\date{}
\begin{document}
\maketitle
\begin{abstract}
If an irreducible fraction $\frac mn>0$ can be decomposed into the sum of several irreducible proper fractions with different denominators, and the positive number smaller than $\frac mn$ in fractional ideal $\frac 1n\mathbb Z$ can not be obtained by replacing some numerator with smaller non-negative integers, then the decomposition is said to be faithful. For $t\in\mathbb Z$, we prove that the length of faithful decomposition of an irreducible fraction $\frac mn$ with $2\le t\le\frac mn<t+1$ is at least $t+2$. In addition, we show a faithful decomposition of rationals consisting only of unit fractions except for one term. And we write $\frac 4n$ as a faithful decomposition with three fractions at most one non-unit fraction.

\textit{Key words}. fraction decomposition, faithful, length, unit fraction, Egyptian fraction.

2020 \textit{Mathematics Subject Classification}. 11D68, 11A05, 11A41, 11A07.
\end{abstract}

\section{Introduction}

\begin{definition}
Let $\frac mn$ be a positive rational. The sum of form
\begin{equation}\label{equ:decomposition}
\frac mn=\frac{a_1}{b_1}+\frac{a_2}{b_2}+\cdots+\frac{a_t}{b_t}
\end{equation}
is called the \textbf{fraction decomposition} of $\frac mn$, or \textbf{decomposition} for short, where $\frac{a_1}{b_1}$, $\frac{a_2}{b_2}$, $\dots$, $\frac{a_t}{b_t}$ are positive rationals and $b_1$, $b_2$, $\dots$, $b_t$ are different positive integers. The number $t$ is called the \textbf{length} of the decomposition.
\end{definition}

A fraction with the numerator $1$ is called a \textbf{unit fraction}. We denote the principal fractional ideal generated by unit fraction $\frac{1}{n}$ as $\frac{1}{n}\mathbb{Z}$.

\begin{definition}
Let $u=\frac mn$ be a positive irreducible fraction. The decomposition \eqref{equ:decomposition} is called \textbf{faithful} if for any $0\le x_i\le a_i$, $v=\sum_{i=1}^t\frac{x_i}{b_i}\notin\frac{1}{n}\mathbb{Z}$ except for $v=u$ and $v=0$.
\end{definition}

In 2003, Croot III \cite{croot2003coloring} solved the famous problem asked by Erd\H{o}s and Graham \cite[P36]{erdos1980old} in 1980. If the integers greater than 1 are partitioned into finitely many subsets, then one of the subsets has a finite subset of itself whose reciprocals sum to 1. In 2020, Yao \cite{yao2020note} proved that the length of the shortest faithful de\-com\-po\-si\-tion of $n\ge2$ is equal to $n+2$, and conjectured that any positive integer $n$ has a faithful decomposition consisting only of any unit fractions. In 2019, Ding \cite{ding2019note} proved a special fraction decomposition of positive integer. For any integer partition $m=m_1+\dots+m_e$, all the faithful decompositions of $m_i$ construct the fraction decomposition of $m$, whose partial sum in $\mathbb Z$ is in 1-1 correspondence with the partial sum of the $\{m_i\}_{i=1}^e$.

A finite sum of unit fractions is called Egyptian fraction. For a given positive integer $m$ and any $n>1$, whether $\frac mn$ can be written as the sum of three unit fractions is an interesting topic. Schinzel \cite{Selecta,Breusch1956,hagedorn2000proof} proved that the conclusion is true in case $m=3$. Erd\H{o}s \cite[P44]{erdos1980old} conjectured that the conclusion is true in case $m=4$. Franceschine \cite{Franceschine} and Mordell \cite[P287]{mordell1969diophantine} proved that Erd\H{o}s' conjecture holds for all $n<10^8$ and $n\not\equiv1$, $11^2$, $13^2$, $17^2$, $19^2$, $23^2\pmod{840}$. Similarly, Sierpi\'nski \cite{sierpinski1956decompositions} conjectured that the conclusion is true in case $m=5$. Stewart \cite[P203]{stewart1964theory} proved that Sierpi\'nski's conjecture holds for all $n\le 105743881$ and $n\not\equiv 1\pmod{278460}$.

In this paper, we mainly generalize the conclusions of Yao \cite{yao2020note} and Ding \cite{ding2019note} to $\mathbb Q$, prove that any positive irreducible fraction has infinitely many faithful decompositions consisting only of unit fractions except for one term, and show the faithful decomposition of $\frac 4n$ written as a sum of three fractions at most one non-unit fraction. The main results of this paper are as follows.

\begin{theorem}\label{thm:length_t+2}
Let $\frac mn$ be a positive irreducible fraction and $t$ be a positive integer. If $2\le t\le\frac mn<t+1$, then the length of faithful decomposition of $\frac mn$ is at least $t+2$. There are infinitely many faithful decompositions with length $t+2$.
\end{theorem}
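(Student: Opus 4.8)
The plan is to prove the two assertions of the theorem separately: first the lower bound on the length $\ell$ of an arbitrary faithful decomposition of $\frac mn$, and then the existence of infinitely many faithful decompositions with $\ell=t+2$. Throughout I write $\ell$ for the length, reserving $t$ for the integer with $2\le t\le\frac mn<t+1$.

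I would begin by extracting the elementary per-term consequences of faithfulness, which already give $\ell\ge t+1$. Fixing an index $i$ and setting $x_j=0$ for $j\ne i$, the value $\frac{x_i}{b_i}$ is reachable, so faithfulness forces $\frac{x_i}{b_i}\notin\frac1n\mathbb Z$ for every $0<x_i\le a_i$; the endpoint case $x_i=a_i$ uses $\ell\ge2$, which is immediate since a single term equal to $\frac mn\ge2$ already reaches the value $1\in\frac1n\mathbb Z$. Writing $g_i=\gcd(b_i,n)$, the smallest positive $x_i$ with $b_i\mid nx_i$ is $b_i/g_i$, so we must have $a_i<b_i/g_i$, i.e. $\frac{a_i}{b_i}<\frac1{g_i}\le 1$. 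Summing over $i$ gives $t\le\frac mn=\sum_i\frac{a_i}{b_i}<\sum_i\frac1{g_i}\le\ell$, hence $\ell\ge t+1$.

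To upgrade this to $\ell\ge t+2$ I would rule out $\ell=t+1$, and this is the crux. The inequality $\sum_i\frac1{g_i}>t$ with $t+1$ summands forces at most one index with $g_i\ge2$. If all $g_i=1$, then each $b_i$ is coprime to $n$; since $n\mid\operatorname{lcm}(b_1,\dots,b_\ell)$ this forces $n=1$, the integer case. If exactly one $g_i\ge2$, the same divisibility pins down $n\mid b_i$ for that single index, so that term is $<\frac1n$ while the remaining $t$ terms have denominators coprime to $n$ and sum to within $\frac1n$ of $\frac mn$. In either case the configuration collapses onto terms that are close to $1$ (small deficits $\delta_i=1-\frac{a_i}{b_i}$ with $\sum_i\delta_i=t+1-\frac mn\le 1$) with denominators essentially coprime to the relevant modulus, and faithfulness must then be contradicted by exhibiting a reachable multiple of $\frac1n$ among $1,\dots,t$. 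My intended mechanism is an induction on $t$: deleting the term $\frac{a_1}{b_1}=1-\delta_1$ closest to $1$ lowers the total into $[t-1,t)$ (because $\delta_1<\sum_i\delta_i$), producing a shorter decomposition to which the inductive bound $\ge(t-1)+2=t+1$ would apply, contradicting $\ell-1=t$. The base case $t=2$ (excluding $\ell=3$ for $\frac mn\in[2,3)$) would be handled directly.

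The main obstacle is exactly this deletion step: faithfulness is defined relative to $\frac1n\mathbb Z$, whereas the reduced denominator $n'$ of the truncated sum $\sum_{i\ge2}\frac{a_i}{b_i}$ need neither equal $n$ nor divide it, so inherited faithfulness is not automatic and the induction does not close for free. I expect the real work to be controlling this denominator change, either by selecting the deleted term so that $n'\mid n$, or by working over a fixed common denominator and tracking which reachable residues are forbidden, or by exploiting the structural fact $n\mid b_i$ from the gcd-case analysis to analyze the coprime terms independently modulo $n$. For the second assertion I would construct the extremal family explicitly. Since $t+2$ proper fractions summing to $\frac mn\ge t$ cannot be unit fractions (their reciprocal sum would be far too large), I would instead take $t+1$ terms of the form $1-\frac1{b_i}$ with large distinct denominators whose reciprocals sum to slightly more than $(t+1)-\frac mn$, together with one correcting term $\frac cd=\frac mn-(t+1)+\sum_{i}\frac1{b_i}\in(0,1)$; this has length $t+2$ and sum exactly $\frac mn$, and letting the large denominators vary produces infinitely many such decompositions. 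Faithfulness would be verified by a divisibility computation over the common denominator, arranging one denominator to be a prime exceeding all the others and coprime to $n$, so that this prime isolates its numerator and the only reachable numerators divisible by the modulus are those of the all-zero and all-maximal choices; this simultaneously forces distinctness of denominators and yields the infinitude.
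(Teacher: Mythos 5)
There is a genuine gap, and it sits exactly where you flagged it: the step ruling out length $t+1$. Your structural analysis is correct and matches the paper's up to a point --- at most one $g_i=\gcd(b_i,n)$ exceeds $1$, and for that single index $n\mid b_i$, so that term is $<\frac1n$ while the other $t$ terms are each $<1$. But you then launch an induction on $t$ whose deletion step you yourself admit does not close (the truncated sum lives in the wrong fractional ideal), and you leave the base case unproved. What you missed is that the facts you already have finish the job with no induction at all: from $\frac{a_i}{b_i}<\frac1n$ and the other $t$ terms summing to less than $t$, you get $t\le\frac mn<t+\frac1n$; since $\frac mn\in\frac1n\mathbb Z$, this forces $\frac mn=t$, an integer, hence $n=1$ (and the ``all $g_i=1$'' branch gives $n=1$ directly, as you noted). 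The problem therefore collapses to the integer case, which is Yao's Theorem 2, quoted in the paper's introduction: an integer $t\ge2$ has no faithful decomposition of length $t+1$. That citation (or a proof of the integer case) is the missing ingredient; without it, or with your non-closing induction in its place, the lower bound is not established.

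On the existence of infinitely many length-$(t+2)$ decompositions, your construction is in the same spirit as the paper's ($t$ versus your $t+1$ terms of the form $1-\frac1{b_i}$ plus a remainder), but the faithfulness verification is only gestured at. The paper makes this rigorous by forcing a very specific shape: the remainder in $(0,1)$ is split as $\frac xy+\frac1{np_1\cdots p_ty}$ with $ym'-xn'=1$ (Proposition \ref{prop:2fenjie}), and then Proposition \ref{prop:faithful_in_Q} shows that pairwise coprimality of $n,b_1,\dots,b_t,y$ together with the final numerator being exactly $1$ decouples the divisibility conditions on the $x_i$. Your single correcting term $\frac cd$ with a possibly large numerator $c$ does not decouple in the same way (sub-sums involving $0<x_{t+2}<c$ give coupled congruences across the moduli), so ``the only reachable numerators divisible by the modulus are the all-zero and all-maximal choices'' would need an actual argument. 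Also note that with $t+1$ near-one terms you need $\sum_{i=1}^{t+1}\frac1{b_i}>(t+1)-\frac mn$, which can be close to $1$ and prevents all the $b_i$ from being large, so the ``infinitely many'' claim needs slightly more care than ``let the large denominators vary.''
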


\begin{theorem}\label{thm:all1except1term}
Let $\frac mn$ be a positive irreducible fraction and $\Omega$ be a finite set of positive integers. There are infinitely many ways to choose an integer $x$ and a sequence $\{b_i\}_{i=1}^{t+1}$ of positive integers such that
\[
\frac mn = \frac1{b_1} + \dots + \frac1{b_t} + \frac{x}{b_{t+1}} + \frac1{nb_1\cdots b_tb_{t+1}}
\]
is faithful, where $b_i$ is coprime to $\{b_j\}_{j\neq i}\cup \Omega$ for $1\le i\le t+1$.
\end{theorem}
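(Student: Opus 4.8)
The plan is to build the decomposition in three stages: first lay down $t$ pairwise coprime unit fractions whose sum sits just below $\frac mn$, then absorb the remainder into a single proper fraction $\frac x{b_{t+1}}$ together with the designated correction term $\frac1{nb_1\cdots b_tb_{t+1}}$, and finally verify faithfulness by a congruence argument that the correction term makes rigid. Throughout I write $B_t=b_1\cdots b_t$ and $P=B_tb_{t+1}$, so the last denominator is $nP$, and I enlarge $\Omega$ to contain every prime dividing $n$, so that I may assume each $b_i$ is coprime to $n$ as well.

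Stage 1 (the unit part). First I would choose distinct primes $b_1,\dots,b_t$, none dividing $n$ or any element of $\Omega$, so that $S:=\sum_{i=1}^t\frac1{b_i}$ lands in the window $(\frac mn-1,\frac mn)$. This is possible because $\sum_p\frac1p$ diverges while each admissible reciprocal is at most $\tfrac12<1$: adding admissible prime reciprocals one at a time, the partial sums increase to $+\infty$ in steps smaller than $1$, so I can stop at the last partial sum strictly below $\frac mn$; that sum automatically exceeds $\frac mn-1$, and a harmless swap of one prime guarantees $R:=\frac mn-S\in(0,1)$ rather than $R=0$. As distinct primes, the $b_i$ are pairwise coprime, coprime to $n$ and to $\Omega$, and each is at least $2$.

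Stage 2 (the remaining two terms). Writing $R=\frac A{nB_t}$ with $A:=mB_t-n\sum_{i=1}^t B_t/b_i\in\mathbb Z$, I would first record that $\gcd(A,nB_t)=1$: reducing $A$ modulo $b_j$ kills every term but $-n(B_t/b_j)$, a unit mod $b_j$, and reducing modulo $n$ leaves $mB_t$, again a unit. By Dirichlet's theorem the progression $A^{-1}\pmod{nB_t}$ contains infinitely many primes, and I would take $b_{t+1}$ to be such a prime, larger than $n$, than every $b_i$, than every element of $\Omega$, and large enough that $x:=\frac{Ab_{t+1}-1}{nB_t}$ is positive. Such a prime is automatically coprime to the $b_i$, to $n$, and to $\Omega$, and since $R<1$ one checks $x=Rb_{t+1}-\frac1{nB_t}<b_{t+1}$, so $\frac x{b_{t+1}}$ is proper, while $\frac x{b_{t+1}}+\frac1{nP}=\frac{xnB_t+1}{nP}=\frac A{nB_t}=R$ makes the stated identity exact. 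The inequality $x<b_{t+1}$ is not cosmetic: if $x\ge b_{t+1}$ the choice $x_{t+1}=b_{t+1}$ with all other numerators zero would give $1\in\frac1n\mathbb Z$, so this is precisely the constraint forcing Stage 1 to overshoot by less than $1$.

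Stage 3 (faithfulness), the crux and the step I expect to carry the real content. Given numerators $0\le x_i\le a_i$, clearing denominators shows $v=\sum\frac{x_i}{b_i}\in\frac1n\mathbb Z$ iff $P$ divides $\sum_{i\le t}nx_i\frac P{b_i}+nB_tx_{t+1}+x_{t+2}$, which by coprimality splits into $nx_j\frac P{b_j}\equiv-x_{t+2}\pmod{b_j}$ for $j\le t$ together with $nB_tx_{t+1}\equiv-x_{t+2}\pmod{b_{t+1}}$. Here the binary numerator $x_{t+2}\in\{0,1\}$ behaves as an all-or-nothing switch. If $x_{t+2}=0$, each unit $n\frac P{b_j}$ forces $x_j\equiv0$, hence $x_j=0$ since $0\le x_j\le1<b_j$, and likewise $x_{t+1}=0$, giving $v=0$. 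If $x_{t+2}=1$, I would feed in the exactness relation $mP=n\sum_{i\le t}\frac P{b_i}+nB_tx+1$, whose reductions modulo $b_j$ and $b_{t+1}$ give $n\frac P{b_j}\equiv-1$ and $nB_tx\equiv-1$; the congruences then read $x_j\equiv1\pmod{b_j}$ and $x_{t+1}\equiv x\pmod{b_{t+1}}$, forcing $x_j=1$ and, since $0\le x_{t+1}\le x<b_{t+1}$, $x_{t+1}=x$, so $v=\frac mn=u$. Thus the only partial sums in $\frac1n\mathbb Z$ are $0$ and $u$, and the decomposition is faithful; the infinitude of admissible primes $b_{t+1}$ supplied by Dirichlet then yields infinitely many such decompositions, completing the plan.
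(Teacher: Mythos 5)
Your proposal is correct and follows essentially the same route as the paper: greedily chosen primes for the unit part via the divergence of $\sum_p \frac1p$, then Dirichlet's theorem to place $b_{t+1}$ in the residue class $A^{-1}\pmod{nB_t}$ (the paper phrases this as a B\'ezout identity $zy_0-x_0nB_t=1$ shifted along the progression, which is the same thing), and finally the pairwise-coprimality congruence argument for faithfulness. The only difference is that your Stage~3 reproves inline what the paper isolates as \Cref{prop:faithful_in_Q} and simply cites.
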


\begin{theorem}\label{thm:ding}
Let $\frac mn$ be a positive irreducible fraction. For any integer partition $m=m_1+\dots +m_e$, $1\le m_1\le\dots\le m_e\le m$, $e\ge1$, there are infinitely many ways to choose a sequence $\{n_i\}_{i=1}^{k_e}$ of positive integers such that
\[
\frac mn = \frac{1}{n_1} + \dots + \frac{1}{n_{k_e}}.
\]
Let
\[
S_e:=\Bigg\{\sum_{i\in I}\frac{1}{n_i}\in\frac 1n\mathbb{Z}:I\subset\{1,\dots,k_e\}\Bigg\},
\quad
T_e:=\Bigg\{\sum_{i\in I}\frac{m_i}{n}:I\subset\{1,\dots,e\}\Bigg\}.
\]
Then $S_e=T_e$.
\end{theorem}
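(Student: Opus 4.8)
The plan is to build the Egyptian decomposition block by block. First I would split $\frac mn=\sum_{j=1}^e\frac{m_j}n$ and replace each summand by an all--unit faithful decomposition, chaining the blocks so that their denominators are as coprime as the presence of $n$ permits. Concretely, process $j=1,\dots,e$ in turn; having fixed the denominators of blocks $1,\dots,j-1$, set $d_j=\gcd(m_j,n)$, $q_j=n/d_j$, $p_j=m_j/d_j$ and apply \Cref{thm:all1except1term} to the irreducible fraction $\frac{p_j}{q_j}$ with $\Omega$ enlarged to contain $n$ together with every prime already used. This yields a faithful decomposition of $\frac{p_j}{q_j}$ in which every denominator except the tail $\frac1{q_jb_1\cdots b_{t+1}}$ is coprime to $n$, while the tail carries exactly the primes of $q_j$. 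The lone non--unit summand $\frac{x}{b_{t+1}}$ is then split into distinct unit fractions $\{1/c_l\}$ whose denominators are coprime to $n$ and to all previously used denominators; writing the resulting unit fractions of block $j$ as $\{1/n_i:i\in B_j\}$ and concatenating gives $\frac mn=\sum_{i=1}^{k_e}1/n_i$ with $\{1,\dots,k_e\}=\bigsqcup_jB_j$ and $\sum_{i\in B_j}1/n_i=\frac{m_j}n$.

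The inclusion $T_e\subseteq S_e$ is then immediate: for $I\subseteq\{1,\dots,e\}$ the union of full blocks $\bigcup_{j\in I}B_j$ is a subset of indices whose reciprocal sum is $\sum_{j\in I}\frac{m_j}n\in\frac1n\mathbb Z$, so it lies in $S_e$. For the reverse inclusion I would use a $p$--adic separation argument. Take $J\subseteq\{1,\dots,k_e\}$ with $v:=\sum_{i\in J}1/n_i\in\frac1n\mathbb Z$ and put $v_j:=\sum_{i\in J\cap B_j}1/n_i$, so $v=\sum_jv_j$. For a prime $p\nmid n$, the construction arranges that $p$ divides the denominators of at most one block; if that block were $j_0$ with $v_p(v_{j_0})<0$, then, since $v_p(v_j)\ge0$ for $j\ne j_0$, we would get $v_p(v)=v_p(v_{j_0})<0$, contradicting $p\nmid n$ and $v\in\frac1n\mathbb Z$. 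Hence $v_p(v_j)\ge0$ for every block and every $p\nmid n$. For a prime $p\mid n$, within block $j$ the only denominator divisible by $p$ is the tail, which contributes $v_p=-v_p(q_j)$, so $v_p(v_j)\ge-v_p(q_j)$. Combining these two ranges shows the reduced denominator of each $v_j$ divides $q_j$, i.e. $v_j\in\frac1{q_j}\mathbb Z$; the faithfulness of block $j$ as a decomposition of the irreducible $\frac{p_j}{q_j}$ then forces $v_j\in\{0,\frac{m_j}n\}$, whence $v=\sum_{j:\,v_j\ne0}\frac{m_j}n\in T_e$.

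The principal obstacle is the splitting step that removes the non--unit term $\frac{x}{b_{t+1}}$ while keeping the block faithful and the new denominators coprime to $n$ and to the rest. A naive split enlarges the family of attainable partial sums and may create spurious elements of $\frac1{q_j}\mathbb Z$, destroying faithfulness; so the split must be engineered so that the only subset sums of $\{1/c_l\}$ lying in $\frac1{b_{t+1}}\mathbb Z$ are the multiples $0,\frac1{b_{t+1}},\dots,\frac{x}{b_{t+1}}$, after which faithfulness of the whole block is recovered from that of the original decomposition in \Cref{thm:all1except1term}. I expect to realise this by a controlled application of a splitting identity of the form $\frac1k=\frac1{k+1}+\frac1{k(k+1)}$ (or a tailored variant) to $\frac{x}{b_{t+1}}$, choosing each newly created denominator from a residue class coprime to $n$ and to all denominators used so far; this remains possible because \Cref{thm:all1except1term} already furnishes infinitely many admissible $b_{t+1}$ and the splitting identity admits infinitely many shifted versions.

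Finally, the infinitude claim follows from the same freedom. At each stage \Cref{thm:all1except1term} offers infinitely many admissible denominator sequences, and the splitting step offers infinitely many coprime choices of the $c_l$; selecting distinct data at some block produces genuinely different sequences $\{n_i\}_{i=1}^{k_e}$, all satisfying $S_e=T_e$. The delicate points to verify carefully are that the reduction to $\frac{p_j}{q_j}$ together with the coprimality of the $b_i$ and $c_l$ to $n$ really confines the primes of $n$ to the tail term, and that the faithfulness-preserving split can always be carried out under the accumulating coprimality constraints; everything else is the bookkeeping of valuations sketched above.
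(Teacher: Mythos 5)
Your overall architecture matches the paper's: decompose each $\frac{m_j}{n}$ faithfully with denominators (other than the one carrying $n$) pairwise coprime across blocks, show that any partial sum landing in $\frac1n\mathbb Z$ must localize to a sum of per-block contributions each lying in the relevant fractional ideal, and then invoke per-block faithfulness to force each contribution to be $0$ or $\frac{m_j}{n}$. Your $p$-adic valuation argument for the localization step is a clean equivalent of the paper's device of multiplying through by the product of the other blocks' denominators, and the reduction to $\frac{p_j}{q_j}$ is handled correctly.

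However, there is a genuine gap at the step you yourself flag as the ``principal obstacle'': the conversion of the non-unit term $\frac{x}{b_{t+1}}$ into distinct unit fractions $\{1/c_l\}$ while preserving faithfulness. You only conjecture that a shifted variant of $\frac1k=\frac1{k+1}+\frac1{k(k+1)}$ will work; you do not exhibit a split with the required property, and the property you ask for is itself problematic. Note that the $c_l$ cannot all be coprime to $b_{t+1}$ (otherwise no subset sum of the $1/c_l$ other than $0$ could equal $\frac{x}{b_{t+1}}$), so after the split the block no longer has the ``one prime per denominator slot'' structure your valuation argument and the per-block faithfulness both rely on: a subset of the $c_l$ can produce values $\sigma\notin\{0,\frac1{b_{t+1}},\dots,\frac{x}{b_{t+1}}\}$ whose interaction with the remaining terms must be controlled afresh. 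This whole difficulty is avoidable: the theorem only asks for a sequence of positive integers $n_i$, with no distinctness requirement, and $S_e$ ranges over subsets of \emph{indices}. The paper simply takes each block from \Cref{prop:general_pairwise_coprime} in the form $\sum_j\frac{a_{ij}}{b_{ij}}+\frac1{nb_{i1}\cdots b_{it_i}}$ and reads $\frac{a_{ij}}{b_{ij}}$ as $a_{ij}$ repeated copies of $\frac1{b_{ij}}$; then choosing a subset of indices is exactly choosing $0\le c_{ij}\le a_{ij}$, which is precisely the quantifier in the definition of faithfulness, and no splitting lemma is needed. If you either adopt repeated denominators or supply a genuine proof of your faithfulness-preserving split, the rest of your argument goes through.
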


\begin{theorem}\label{thm:4/n}
Let $n\ge5$, then the irreducible proper fraction $\frac 4n$ has a faithful de\-com\-po\-si\-tion in the form of
\begin{equation}\label{equ:4/n}
\frac 4n = \frac 1x + \frac 1y + \frac rz, \qquad r=1\text{ or }2.
\end{equation}
\end{theorem}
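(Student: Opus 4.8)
The plan is to reduce to the case where $n$ is odd (forced by irreducibility of $\frac 4n$) and $n\ge5$, and then to build the decomposition by a controlled greedy step. Writing $x_1=\lceil n/4\rceil$, one has $\frac 4n=\frac1{x_1}+\frac{c}{nx_1}$ with $c=4x_1-n\in\{1,3\}$, where $c=1$ exactly when $n\equiv3\pmod4$ and $c=3$ exactly when $n\equiv1\pmod4$. The governing idea throughout is to arrange the three final denominators $b_1<b_2<b_3$ so that every prime factor of $n$ sits in $b_3$ alone, with $b_1,b_2$ coprime to $n$; concretely I will engineer $b_3=nb_1b_2$ (or a small divisor-adjusted variant). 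This normalization is what makes faithfulness tractable, so I expect the whole argument to hinge on realizing this shape in each residue class.

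For $n\equiv3\pmod4$ I would take $x_1=\frac{n+1}4$, note $\gcd(x_1,n)=1$, and split the $n$-divisible tail via $\frac1{nx_1}=\frac1{nx_1+1}+\frac1{nx_1(nx_1+1)}$. This yields the three unit fractions $\frac4n=\frac1{x_1}+\frac1{nx_1+1}+\frac1{nx_1(nx_1+1)}$ (so $r=1$), with $b_1=x_1$, $b_2=nx_1+1\equiv1\pmod n$, and $b_3=nb_1b_2$; distinctness is immediate. For faithfulness, multiply a sub-sum $v=\frac{\epsilon_1}{b_1}+\frac{\epsilon_2}{b_2}+\frac{\delta}{b_3}$ by $n$: using $b_3=nb_1b_2$ this collapses to $nv=\frac{n\epsilon_1b_2+n\epsilon_2b_1+\delta}{b_1b_2}$, and since $\gcd(b_1,b_2)=1$ integrality splits into the two congruences $\delta\equiv\epsilon_1\pmod{b_1}$ and $\delta\equiv\epsilon_2\pmod{b_2}$ (here $n\equiv-1\pmod{x_1}$ and $nb_1\equiv-1\pmod{b_2}$ do the bookkeeping). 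Because $b_1,b_2$ dwarf the admissible values $\epsilon_i,\delta\in\{0,1\}$, this forces $\epsilon_1=\epsilon_2=\delta$, i.e. $v\in\{0,\frac4n\}$.

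For $n\equiv1\pmod4$ I would take $x_1=\frac{n+3}4$ and confront the harder remainder $\frac{3}{nx_1}$, splitting it by a second greedy step according to $N:=nx_1\bmod3$: if $N\equiv2$ then $\frac3N=\frac1a+\frac1{Na}$ with $a=\frac{N+1}3$ (all unit, $r=1$); if $N\equiv1$ then $\frac3N=\frac1a+\frac2{Na}$ with $a=\frac{N+2}3$, which is exactly where the numerator $r=2$ is forced; and if $3\mid N$ then $\frac3N$ collapses to a single unit fraction and I split its $n$-divisible tail as in the previous case. In each branch the largest denominator is again $nb_1b_2$ up to a factor dividing $3$, so the same congruence mechanism applies. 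The delicate point is that here $\gcd(x_1,n)\mid3$, so when $3\mid n$ the smallest denominator can share the factor $3$ with $n$; I will verify (by the same congruence computation, now carrying an extra factor $3$) that this never produces a stray sub-sum in $\frac1n\mathbb Z$ as long as $x_1\nmid n$.

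The main obstacle is faithfulness rather than existence: the genuine work is ruling out all of the up-to-twelve proper sub-sums landing in $\frac1n\mathbb Z$, and the only uniform way I see to control them simultaneously is the $b_3=nb_1b_2$ normalization that converts the membership test into a pair of congruences modulo $b_1$ and $b_2$. The single place this scheme breaks is when the leading greedy denominator itself divides $n$; since $x_1\mid n$ together with $4x_1=n+3$ forces $x_1\mid3$, this occurs only at $n=9$, where $x_1=3\mid9$ makes $\frac13\in\frac19\mathbb Z$. I would dispose of this lone exception by the ad hoc faithful decomposition $\frac49=\frac14+\frac16+\frac1{36}$, and verify the handful of smallest $n$ directly before invoking the general estimates.
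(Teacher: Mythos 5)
Your construction is correct in substance but takes a genuinely different route from the paper. The paper writes down closed-form three-term decompositions governed by $r\equiv-2n\pmod 4$ (\Cref{cor:m/n_faithful_iff}) and certifies faithfulness through the if-and-only-if criterion of \Cref{thm:r/s_1n} for decompositions of the shape $\frac1{y_2}+\frac1{y_1}+\frac x{yn}$; this yields denominators of size $O(n^2)$, forces two exceptional values ($n=9$, handled by the ad hoc $\frac49=\frac14+\frac16+\frac1{36}$, and $n=15$, handled via \Cref{prop:div_p}), and, as the paper remarks, the same criterion recycles for $\frac3n$ and $\frac5n$. You instead run a two-step greedy (Fibonacci--Sylvester) construction together with the splitting identity $\frac1M=\frac1{M+1}+\frac1{M(M+1)}$, engineering the last denominator to be $nb_1b_2$ so that \Cref{prop:faithful_in_Q} delivers faithfulness for free; this buys a single exceptional case ($n=9$) and reuses an already-proved lemma, at the cost of denominators as large as $O(n^4)$ and a three-way subcase split on $nx_1\bmod 3$. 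One branch of yours needs more care than you allow: when $N\equiv1\pmod3$ the tail is $\frac2{nb_1b_2}$ and $\gcd(b_1,b_2)$ can equal $2$ (e.g.\ $n=5$ gives $b_1=2$, $b_2=4$), so neither \Cref{prop:faithful_in_Q} nor your clean ``pair of congruences modulo $b_1$ and $b_2$'' applies verbatim; a direct check of the sub-sums (which does succeed, using $a\ge4$ and $\gcd(a,n)=\gcd(a,x_1 n)\mid 2$) is required there, and likewise in the $3\mid N$ branch where $\gcd(x_1,n)=3$. With those verifications written out, your argument closes.
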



\section{General Properties of Faithful Decomposition of Rationals}

In this section, we will prove some useful propositions about faithful decomposition.

\begin{proposition}[Divisibility]\label{prop:div_p}
If the decomposition \eqref{equ:decomposition} is faithful, then for any positive integer $C$, the decompositon
\begin{equation}\label{equ:div_p}
\frac {m}{Cn}=\frac{a_1}{Cb_1}+\frac{a_2}{Cb_2}+\cdots+\frac{a_t}{Cb_t}
\end{equation}
is also faithful.
\end{proposition}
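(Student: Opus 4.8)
The plan is to reduce the claim to the elementary observation that multiplying every denominator by $C$ simply scales all partial sums by the factor $1/C$, and that this scaling matches up the two fractional ideals $\frac1n\mathbb Z$ and $\frac1{Cn}\mathbb Z$ exactly. First I would verify that \eqref{equ:div_p} is a legitimate decomposition in the sense of Definition~1: the denominators $Cb_1,\dots,Cb_t$ are still pairwise distinct positive integers because $b_1,\dots,b_t$ are, each term $\frac{a_i}{Cb_i}$ is a positive rational, and multiplying \eqref{equ:decomposition} through by $1/C$ shows the right-hand side sums to $\frac m{Cn}$.

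For faithfulness, I would fix an arbitrary choice of integers $0\le x_i\le a_i$ and set $v=\sum_{i=1}^t\frac{x_i}{b_i}$ together with $v'=\sum_{i=1}^t\frac{x_i}{Cb_i}$, so that $v'=v/C$. The key step is the equivalence
\[
v'\in\tfrac1{Cn}\mathbb Z\iff v\in\tfrac1n\mathbb Z,
\]
which holds because $v=\frac kn$ forces $v'=\frac k{Cn}$, and conversely $v'=\frac k{Cn}$ forces $v=Cv'=\frac kn$. Hence a partial sum of \eqref{equ:div_p} lands in $\frac1{Cn}\mathbb Z$ precisely when the corresponding partial sum of \eqref{equ:decomposition} lands in $\frac1n\mathbb Z$.

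Applying the hypothesis that \eqref{equ:decomposition} is faithful, the only values of $v$ producing such a coincidence are $v=\frac mn$ (all $x_i=a_i$) and $v=0$ (all $x_i=0$). Under the scaling $v\mapsto v/C$ these become exactly $v'=\frac m{Cn}$ and $v'=0$, which are precisely the two values excluded in the definition of faithfulness applied to \eqref{equ:div_p}. This yields the conclusion.

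I do not expect a genuine obstacle here, since the entire content is the ideal-membership equivalence above. The one point worth stating with care is that faithfulness is a condition on partial sums relative to the ideal $\frac1{Cn}\mathbb Z$ attached to the written denominator $Cn$, so the argument is unaffected even when $\frac m{Cn}$ is not in lowest terms (for instance when $\gcd(m,C)>1$): the bijection $v\mapsto v/C$ between the partial sums of the two decompositions respects the passage between $\frac1n\mathbb Z$ and $\frac1{Cn}\mathbb Z$ regardless.
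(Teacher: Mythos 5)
Your proposal is correct and is essentially the paper's argument: the paper phrases it contrapositively (a violating partial sum $\frac{m'}{Cn}=\sum\frac{x_i}{Cb_i}$ scales up to a violating partial sum $\frac{m'}{n}=\sum\frac{x_i}{b_i}$ of the original), which is just one direction of your ideal-membership equivalence $v/C\in\frac1{Cn}\mathbb Z\iff v\in\frac1n\mathbb Z$. Your added remark about $\frac{m}{Cn}$ possibly not being in lowest terms is a reasonable clarification of the definition but does not change the substance.
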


\begin{proof}
If \Cref{equ:div_p} is not faithful, then there exist integers $x_i$ and $m'$ with $0\le x_i\le a_i$ and $0<m'<m$ for $1\le i\le t$ such that 
\[
\frac{m'}{Cn}=\frac{x_1}{Cb_1}+\frac{x_2}{Cb_2}+\cdots+\frac{x_t}{Cb_t}.
\]
Then $\frac{m'}{n}$ is decomposed as
\[
\frac{m'}{n}=\frac{x_1}{b_1}+\frac{x_2}{b_2}+\cdots+\frac{x_t}{b_t},
\]
a contradiction.
\end{proof}

\begin{proposition}\label{prop:not_div_and_less}
If the decomposition \eqref{equ:decomposition} is faithful, then $b_i\nmid n$ and $a_i<\frac{b_i}{(b_i,n)}$ for all $1\le i\le t$. 
\end{proposition}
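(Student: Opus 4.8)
The plan is to reduce both assertions to a single observation about when one activated term lands in the ideal $\frac1n\mathbb Z$. First I would fix an index $i$ and set $g=(b_i,n)$ and $b_i'=\frac{b_i}{(b_i,n)}=\frac{b_i}{g}$. Writing $n=gn'$ with $(b_i',n')=1$, I would characterize membership: for a non-negative integer $x$, $\frac{x}{b_i}\in\frac1n\mathbb Z$ if and only if $b_i'\mid x$. Indeed $\frac{x}{b_i}\in\frac1n\mathbb Z$ means $b_i\mid nx$, i.e.\ $gb_i'\mid gn'x$, i.e.\ $b_i'\mid n'x$, which by $(b_i',n')=1$ is equivalent to $b_i'\mid x$. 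In particular the least positive integer $x$ with $\frac{x}{b_i}\in\frac1n\mathbb Z$ is $x=b_i'$, and then $\frac{b_i'}{b_i}=\frac1g\in\frac1n\mathbb Z$ since $g\mid n$.

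The core of the argument is a single-term activation. Suppose toward a contradiction that $a_i\ge b_i'$. Then the choice $x_i=b_i'$ and $x_j=0$ for $j\ne i$ satisfies $0\le x_j\le a_j$ for every $j$, so it is an admissible selection in the faithfulness condition, and it yields $v=\frac{b_i'}{b_i}=\frac1g\in\frac1n\mathbb Z$. Since $b_i'\ge1$ we have $v\ne0$, and because $t\ge2$ every remaining term is positive, so $v\le\frac{a_i}{b_i}<\sum_{j=1}^t\frac{a_j}{b_j}=\frac mn=u$; hence $0<v<u$. This exhibits a selection whose value lies in $\frac1n\mathbb Z$ yet equals neither $0$ nor $u$, contradicting faithfulness. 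Therefore $a_i<b_i'=\frac{b_i}{(b_i,n)}$, which is the second assertion.

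The first assertion then follows for free: since $a_i\ge1$, the inequality $a_i<\frac{b_i}{(b_i,n)}$ forces $\frac{b_i}{(b_i,n)}\ge2$, so $(b_i,n)<b_i$ and thus $b_i\nmid n$. Alternatively one can prove $b_i\nmid n$ directly by the same activation with $x_i=1$, since $b_i\mid n$ would give $\frac1{b_i}\in\frac1n\mathbb Z$ with $0<\frac1{b_i}<u$, again violating faithfulness.

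The step I expect to be delicate is guaranteeing $v\ne u$, that is, ensuring the activated partial sum lands strictly below $u$ rather than exactly at it; this is precisely where the hypothesis $t\ge2$ enters, through the strict inequality $\frac{a_i}{b_i}<u$ coming from the positivity of the other terms. For $t=1$ this strictness can fail at the boundary $a_i=b_i'$ — for instance $\frac1n=\frac{2}{2n}$ is faithful with $a_1=\frac{b_1}{(b_1,n)}$ — so the statement is to be read under the standing assumption, in force throughout the paper, that a faithful decomposition has length at least $2$.
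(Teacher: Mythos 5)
Your proof is correct and takes essentially the same route as the paper's: a single-term activation (set $x_i=b_i'=\frac{b_i}{(b_i,n)}$ and all other numerators to zero), producing the value $\frac{1}{(b_i,n)}\in\frac1n\mathbb Z$ strictly between $0$ and $\frac mn$, which contradicts faithfulness. The differences are only organizational --- you derive $b_i\nmid n$ as a corollary of the inequality instead of treating it as a separate case, and you explicitly flag the $t\ge2$ requirement that the paper's proof also uses silently through the strict inequality $\frac{a_i}{b_i}<\frac mn$.
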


\begin{proof}
Without loss of generality, write $d=(b_1,n)$, $b_1=db_1'$, $n=dn'$.

If $b_1\mid n$, then $n=b_1n'$. Since $\frac mn > \frac{a_1}{b_1}$, we have $m>\frac{a_1n}{b_1}=a_1n'$. We have $\frac{a_1n'}{n} = \frac{a_1}{b_1}$. Hence, the decomposition of $\frac mn$ is not faithful.

If $a_1\ge b_1'$, we have $m>\frac{a_1n}{b_1} = \frac{a_1n'}{b_1'}\ge n'$ as $\frac mn > \frac{a_1}{b_1}$. So, $\frac{n'}{n} = \frac 1d = \frac{b_1'}{b_1}$. The de\-com\-po\-si\-tion of $\frac mn$ is not faithful, either.
\end{proof}

\begin{proposition}\label{prop:faithful_in_Q}
Suppose that $\frac mn>0$ and $\frac{a_1}{b_1}$, $\frac{a_2}{b_2}$, $\dots$, $\frac{a_t}{b_t}$ are proper fractions with
\begin{equation}\label{equ:pairwise_coprime}
\frac mn=\frac{a_1}{b_1} + \frac{a_2}{b_2} + \dots + \frac{a_t}{b_t} + \frac{1}{nb_1b_2\cdots b_t}.
\end{equation}
If $n$, $b_1$, $b_2$, $\dots$, $b_t$ are pairwise coprime, then the decomposition \eqref{equ:pairwise_coprime} is faithful.
\end{proposition}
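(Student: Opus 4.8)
The plan is to translate faithfulness into a system of congruences and solve it with the Chinese Remainder Theorem. Write $B = b_1 b_2 \cdots b_t$, so the last denominator is $nB$ and a generic subsum has the form $v = \sum_{i=1}^t \frac{x_i}{b_i} + \frac{x_{t+1}}{nB}$ with $0 \le x_i \le a_i$ for $1 \le i \le t$ and $x_{t+1} \in \{0,1\}$, since the final numerator equals $1$. The goal is to show that $v \in \frac1n\mathbb{Z}$ forces either $v = 0$ or $v = u$. Clearing denominators, $v \in \frac1n\mathbb{Z}$ is equivalent to $nv \in \mathbb{Z}$, and since each $b_i \mid B$ we may write $nv = N/B$ with $N := \sum_{i=1}^t n x_i \frac{B}{b_i} + x_{t+1}$. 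Thus the condition becomes $B \mid N$.

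First I would exploit the pairwise coprimality. Because $b_1, \dots, b_t$ are pairwise coprime and $B = \prod_i b_i$, the divisibility $B \mid N$ holds if and only if $b_j \mid N$ for every $j$. For fixed $j$, every cross term $n x_i (B/b_i)$ with $i \neq j$ is divisible by $b_j$, so reducing modulo $b_j$ leaves $N \equiv n x_j \frac{B}{b_j} + x_{t+1} \pmod{b_j}$. Hence the faithfulness condition is equivalent to the system $n x_j \frac{B}{b_j} + x_{t+1} \equiv 0 \pmod{b_j}$ for $1 \le j \le t$. Here $\gcd(n, b_j) = 1$ by hypothesis, and $\gcd(B/b_j, b_j) = 1$ since $B/b_j = \prod_{i \neq j} b_i$ is a product of integers coprime to $b_j$; therefore $n\,B/b_j$ is a unit modulo $b_j$, and for each fixed value of $x_{t+1}$ the congruence determines $x_j$ uniquely modulo $b_j$.

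It then remains to read off the two cases. Since $\frac{a_j}{b_j}$ is a proper fraction we have $a_j < b_j$, so $0 \le x_j \le a_j < b_j$, and therefore $x_j$ is pinned down by its residue modulo $b_j$. If $x_{t+1} = 0$, then $x_j \equiv 0 \pmod{b_j}$ gives $x_j = 0$ for all $j$, whence $v = 0$. If $x_{t+1} = 1$, then the identity \eqref{equ:pairwise_coprime} (taking all $x_i = a_i$) exhibits $x_j = a_j$ as a solution of the congruence, so by uniqueness $x_j = a_j$ for all $j$, whence $v = u$. This exhausts the possibilities and proves the decomposition is faithful.

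I expect the only delicate point to be the reduction step: verifying that the pairwise coprimality lets one pass from $B \mid N$ to the individual congruences modulo $b_j$, and that the surviving coefficient $n\,B/b_j$ is invertible modulo $b_j$. Everything downstream is forced once the range bound $0 \le x_j \le a_j < b_j$ coming from properness is combined with the uniqueness of solutions modulo $b_j$; in particular, no estimation and no case analysis beyond the two values $x_{t+1} \in \{0,1\}$ is required.
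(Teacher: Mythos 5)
Your proof is correct and follows essentially the same route as the paper's: clear denominators, use the pairwise coprimality to reduce $B\mid N$ to a congruence modulo each $b_j$ in which the coefficient $n\,B/b_j$ is a unit, and invoke $0\le x_j\le a_j<b_j$ (properness) to pin down $x_j$. If anything, you are slightly more complete than the paper, whose proof only writes down subsums omitting the final term $\frac{1}{nb_1\cdots b_t}$ and leaves the case $x_{t+1}=1$ (handled in your argument by uniqueness against the known solution $x_j=a_j$) implicit.
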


\begin{proof}
If \Cref{equ:pairwise_coprime} is not faithful, there exist integers $x_i$ and $m'$ with $0\le x_i\le a_i$ and $0<m'< m$ for $1\le i\le t$ such that 
\[
\frac{m'}{n}
= \frac{x_1}{b_1} + \frac{x_2}{b_2} + \dots + \frac{x_t}{b_t}
= \Bigg(\sum_{i=1}^t x_i\prod_{j\neq i}b_j\Bigg) \mathbin{\Bigg/} \prod_{i=1}^t b_i.
\]
Thus, $n\sum_{i=1}^t x_i\prod_{j\neq i} b_j$ is divisible by $b_i$ for $1\le i\le t$. Since $n$, $b_1$, $\dots$, $b_t$ are pairwise coprime, so we have $b_i\mid x_i$, which contradicts the fact that $\frac{a_i}{b_i}$ is a proper fraction. 
\end{proof}

\begin{proposition}\label{prop:2fenjie}
Let $\frac mn$ be a irreducible proper fraction, $x$ and $y$ be integers with $ym-xn=1$. Then the decomposition
\begin{equation}\label{equ:mn1}
\frac mn=\frac xy + \frac 1{ny},
\end{equation}
is faithful.
\end{proposition}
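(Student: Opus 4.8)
The plan is to recognize \eqref{equ:mn1} as the $t=1$ instance of \Cref{prop:faithful_in_Q} and reduce to that result. First I would confirm that \eqref{equ:mn1} is the claimed decomposition at all: clearing denominators gives $\frac xy+\frac1{ny}=\frac{xn+1}{ny}$, and the Bézout relation $ym-xn=1$ turns the numerator into $xn+1=ym$, so the right-hand side is $\frac{ym}{ny}=\frac mn$. Thus the identity holds automatically once $ym-xn=1$ is assumed.

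Next I would verify the two hypotheses required to invoke \Cref{prop:faithful_in_Q} with $t=1$, $a_1=x$, $b_1=y$, so that the trailing term $\frac{1}{nb_1}$ is exactly $\frac{1}{ny}$. Coprimality $\gcd(n,y)=1$ is immediate, since any common divisor of $n$ and $y$ must divide $ym-xn=1$. For the properness of $\frac xy$ I would use that $\frac mn$ is a proper fraction, so $0<m<n$ and in particular $n\ge 2$; taking the solution $(x,y)$ of $ym-xn=1$ with $y>0$ (and $x>0$, which one may always arrange), the inequality $x<y$ is equivalent to $y(m-n)<1$, and $m-n<0$ makes this automatic. Hence $\frac xy$ is a proper fraction, $y\ne ny$ because $n\ge 2$, and \eqref{equ:mn1} is a genuine length-$2$ decomposition. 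Applying \Cref{prop:faithful_in_Q} then gives faithfulness directly.

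If a self-contained argument is preferred, the same two facts settle it by hand. Any subsum has the form $v=\frac{x_1}{y}+\frac{x_2}{ny}=\frac{x_1n+x_2}{ny}$ with $0\le x_1\le x$ and $x_2\in\{0,1\}$, and $v\in\frac1n\mathbb{Z}$ exactly when $y\mid x_1n+x_2$. Since $\gcd(n,y)=1$, the case $x_2=0$ forces $y\mid x_1$, and the bound $0\le x_1\le x<y$ then gives $x_1=0$, i.e. $v=0$; the case $x_2=1$ forces $y\mid(x_1-x)n$ (subtracting $y\mid xn+1$, which is the Bézout relation), hence $y\mid x_1-x$, and $|x_1-x|<y$ gives $x_1=x$, i.e. $v=\frac mn$. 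So no intermediate value lands in $\frac1n\mathbb{Z}$.

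The only real obstacle here is bookkeeping rather than depth: one must fix the correct representative $(x,y)$ so that \eqref{equ:mn1} is an admissible decomposition (positive terms, a proper fraction $\frac xy$, distinct denominators), with the mild edge case $m=1$ requiring $y>1$ to keep $x$ positive. Once $0<x<y$ and $\gcd(n,y)=1$ are secured, faithfulness is essentially automatic, whether by citing \Cref{prop:faithful_in_Q} or by the short divisibility computation above.
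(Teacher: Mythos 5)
Your proof is correct, and its self-contained second half is essentially the paper's own argument: the paper's proof is a one-liner observing that $(n,y)=1$ and $x<y$ force $\frac{x'}{y}\notin\frac1n\mathbb{Z}$ for $0<x'\le x$. In fact your write-up is more complete than the paper's, which only explicitly rules out the subsums omitting the $\frac{1}{ny}$ term; your treatment of the $x_2=1$ case (using $y\mid xn+1$ to get $y\mid x_1-x$, hence $x_1=x$) fills a step the paper leaves implicit. Your primary route --- invoking \Cref{prop:faithful_in_Q} with $t=1$, $a_1=x$, $b_1=y$ --- is a legitimate alternative the paper does not take; it buys brevity by outsourcing the divisibility argument, at the cost of having to verify the properness hypothesis $0<x<y$, and your attention to normalizing the B\'ezout pair $(x,y)$ (positivity, the $m=1$ edge case, $y\ne ny$) is a point the paper silently assumes.
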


\begin{proof}
Since $(n,y)=1$ and $x<y$, for any $0< x'\le x$, we have $\frac{x'}{y}\notin\frac1n\mathbb{Z}$. Therefore the decomposition \eqref{equ:mn1} is faithful. 
\end{proof}

Here are two examples of faithful decomposition of rationals, also used for the proof of \Cref{thm:4/n}.

\begin{example}\label{exp:some_special_example}
Let $P$ be a perfect number. It is easy to verify the faithful decomposition as follows.
\[
1 = \sum_{\substack{d\vert P,\,d\neq1}}\frac 1d,
\qquad
\frac 49 = \frac 14 + \frac 16 + \frac 1{36}.
\]
\end{example}

\section{Proof of \Cref{thm:length_t+2}}


\begin{proof}[Proof of \Cref{thm:length_t+2}]
We first prove that there exists a faithful decomposition with length $t+2$. 

Take prime numbers $p_1<p_2<\dots<p_t$ large enough such that $\sum_{i=1}^t\frac{1}{p_i}<t+1-\frac mn$ $\big($e.g. $p_1>\frac{tn}{n-m+tn}\big)$ and $p_i\nmid n$ for $1\le i\le t$, then 
\[
0<\frac mn -t+\sum_{i=1}^t\frac{1}{p_i}<1.
\]
Then by \Cref{equ:mn1} and \Cref{prop:faithful_in_Q}, there exist $x$ and $y$ with $\big(y,n\prod_{i =1}^t p_i\big)=1$ such that 
\[
\frac mn=t-\sum_{i=1}^t\frac{1}{p_i}+\frac xy + \frac1{np_1\cdots p_ty}
=\frac{p_1-1}{p_1}+\dots+\frac{p_t-1}{p_t}+\frac xy + \frac1{np_1\cdots p_t y}
\]
is faithful with length $t+2$.

Next we prove that there is no faithful decomposition with length $t+1$.

Suppose that there exists a faithful decomposition
\begin{equation}\label{equ:n+1}
\frac mn = \frac{a_1}{b_1} + \frac{a_2}{b_2} + \dots + \frac{a_{t+1}}{b_{t+1}}
\end{equation}
with length $t+1$. By \Cref{prop:not_div_and_less}, we have $a_i<\frac{b_i}{(b_i,n)}$ for $i=1,2,\dots t+1$. Hence, we obtain $\frac{a_i}{b_i}<\frac{1}{(b_i,n)}\le\frac 12$ if $(b_i,n)>1$. 

If $b_{i_1},b_{i_2}$ are not coprime to $n$, then
\[
\frac mn = \sum_{i=1}^{t+1}\frac{a_i}{b_i} < \frac 12 + \frac 12 + (t-1) = t,
\]
which contradicts $\frac mn \ge t$. So there is at most one $(b_i,n)>1$ in $b_1,b_2,\dots,b_{t+1}$.

Without loss of generality, suppose $(b_1,n)>1$ and $(b_i,n)=1$ for $i=2,\dots,t+1$. \Cref{equ:n+1} implies 
\[
n\sum_{i=1}^{t+1}a_i\prod_{j\neq i}b_j = m\prod_{i=1}^{t+1}b_i,
\]
where $n$ is coprime to $m,b_2,\dots,b_{t+1}$, so $n\mid b_1$. Hence, $\frac{a_1}{b_1}<\frac 1{(b_1,n)}=\frac 1n$,  and then
\[
t \le \frac mn = \frac{a_1}{b_1} + \sum_{i=2}^{t+1}\frac{a_i}{b_i} < \frac 1n + t.
\]
So $\frac mn = t\in\mathbb Z$. According to Yao \cite[Theorem 2]{yao2020note}, the integer $t$ has no faithful de\-com\-po\-si\-tion with length $t+1$. A contradiction.

In summary, the length of faithful decomposition of $\frac mn$ is at least $t+2$.
\end{proof}

\section{Proof of \Cref{thm:all1except1term,thm:ding}}


\begin{proof}[Proof of \Cref{thm:all1except1term}]
Since $\sum_{p\text{ is prime}}\frac{1}{p}$ diverges \cite[P21]{GTM84}, we have infinitely many ways to choose enough positive integers $b_1, b_2, \dots, b_t$ such that
\[
0<\frac mn - \frac1{b_1}-\dots-\frac1{b_t}<1,
\]
where $b_i$ is coprime to $\{b_j\}_{j\neq i}\cup \Omega$ for $1\le i\le t$. Write
\[
\frac mn = \frac1{b_1}+\dots+\frac1{b_t} + \frac{z}{nb_1\cdots b_t}.
\]
Since $z<n\prod_{i=1}^t b_i$ and $\big(z,n\prod_{i=1}^t b_i\big)=1$, we can choose positive integers $x_0$, $y_0$ such that $zy_0-x_0\big(n\prod_{i=1}^t b_i\big)=1$. This equation remains true when we replace $y_0$ and $x_0$ by $y_0+a\big(n\prod_{i=1}^t b_i\big)$ and $x_0+az$ respectively for any positive integer $a$. By Dirichlet prime number theorem on arithmetic progressions \cite[Theorem 15]{hardy2008}, we have infinite ways to choose an integer $a_0$ such that $b_{t+1}:=y_0+a_0\big(n\prod_{i=1}^t b_i\big)$ is coprime to $\{b_j\}_{j=1}^t\cup \Omega$. By \Cref{prop:faithful_in_Q}, the decomposition
\[
\frac mn = \frac1{b_1} + \dots + \frac1{b_t} + \frac{x_0+a_0z}{b_{t+1}} + \frac1{nb_1\cdots b_tb_{t+1}}
\]
is faithful.
\end{proof}

Using a similar proof procedure, we can obtain the following general proposition.

\begin{proposition}\label{prop:general_pairwise_coprime}
Let $\frac mn$ be a positive irreducible fraction and $\Omega$ be a finite set of positive integers. There are infinitely many ways to choose a sequence $\big\{\frac{a_i}{b_i}\big\}_{i=1}^t$ of proper fractions such that
\[
\frac mn = \frac{a_1}{b_1} + \dots + \frac{a_t}{b_t} + \frac1{nb_1\cdots b_t}
\]
is faithful, where $b_i$ is coprime to $\{b_j\}_{j\neq i}\cup \Omega$ for $1\le i\le t$.
\end{proposition}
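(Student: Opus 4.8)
The plan is to mirror the proof of \Cref{thm:all1except1term} almost verbatim, the only new ingredient being that the first terms are now allowed to be arbitrary irreducible proper fractions rather than unit fractions. First I would fix $t$ large enough (any $t$ with $t-1>\frac mn-1$ will do, so that $t-1$ proper fractions can sum past $\frac mn-1$) and choose $t-1$ pairwise coprime integers $b_1,\dots,b_{t-1}$, each coprime to $n$ and to every element of $\Omega$, together with numerators $a_1,\dots,a_{t-1}$ giving irreducible proper fractions $\frac{a_i}{b_i}$, arranged so that the partial sum $S:=\sum_{i=1}^{t-1}\frac{a_i}{b_i}$ satisfies $0<\frac mn-S<1$. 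Since each $\frac{a_i}{b_i}$ can be taken as close to $1$ as desired (e.g.\ $a_i=b_i-1$ with $b_i$ a large prime), there are infinitely many such choices, and I would then write the remainder as $\frac mn-S=\frac{z}{B'}$ with $B':=nb_1\cdots b_{t-1}$ and $0<z<B'$.

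The crucial point, and the only place where the argument genuinely departs from the template of \Cref{thm:all1except1term}, is that this remainder is already in lowest terms, i.e.\ $\gcd(z,B')=1$. I would check this factor by factor: reducing $z=m\,b_1\cdots b_{t-1}-n\sum_{i}a_i\prod_{j\neq i}b_j$ modulo $n$ gives $z\equiv m\prod_i b_i$, which is coprime to $n$ because $(m,n)=1$ and $(b_i,n)=1$; reducing modulo $b_k$ kills every summand except one, leaving $z\equiv -n\,a_k\prod_{j\neq k}b_j$, which is coprime to $b_k$ since $(n,b_k)=1$, the $b_j$ are pairwise coprime, and—this is the new use—the fraction $\frac{a_k}{b_k}$ is irreducible, so $(a_k,b_k)=1$. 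In the unit-fraction setting of \Cref{thm:all1except1term} this last coprimality was automatic, so this is the single extra verification required.

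With $\gcd(z,B')=1$ established, I would finish exactly as in \Cref{thm:all1except1term}: solve the Bézout equation $zy_0-x_0B'=1$, observe that it persists under $y_0\mapsto y_0+aB'$, $x_0\mapsto x_0+az$, and invoke Dirichlet's theorem on primes in arithmetic progressions to pick $b_t:=y_0+a_0B'$ to be a prime coprime to $\{b_j\}_{j=1}^{t-1}\cup\Omega$. Setting $a_t:=x_0+a_0z$, the relation $zb_t-B'a_t=1$ holds, forcing $(a_t,b_t)=1$ and $\frac{a_t}{b_t}<\frac{z}{B'}<1$, so $\frac{a_t}{b_t}$ is an irreducible proper fraction, and a direct computation gives $\frac{z}{B'}=\frac{a_t}{b_t}+\frac{1}{B'b_t}$. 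Hence
\[
\frac mn=\frac{a_1}{b_1}+\dots+\frac{a_t}{b_t}+\frac{1}{nb_1\cdots b_t},
\]
where $n,b_1,\dots,b_t$ are pairwise coprime, so \Cref{prop:faithful_in_Q} yields faithfulness. The infinitely many admissible choices of $a_0$ (and of the initial fractions) provide infinitely many such decompositions. Beyond the coprimality check of the second paragraph, every remaining step is routine, so I expect no further obstacle.
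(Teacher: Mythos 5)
Your proposal is correct and follows essentially the same route as the paper: choose $t-1$ admissible proper fractions leaving a remainder in $(0,1)$, then use B\'ezout and Dirichlet's theorem to produce the last denominator $b_t$, and conclude via \Cref{prop:faithful_in_Q}. The only difference is that you explicitly verify $\gcd(z,nb_1\cdots b_{t-1})=1$ (which requires taking the $\frac{a_i}{b_i}$ irreducible), a detail the paper leaves implicit by saying the argument is ``similar'' to that of \Cref{thm:all1except1term}.
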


\begin{proof}
We can choose the sequence $\big\{\frac{a_i}{b_i}\big\}_{i=1}^{t-1}$ of proper fractions such that
\[
0<\frac mn - \frac{a_1}{b_1}-\dots-\frac{a_{t-1}}{b_{t-1}}<1,
\]
where $b_i$ is coprime to $\{b_j\}_{j\neq i}\cup \Omega$ for $1\le i\le t-1$. By Dirichlet's theorem, there exist integers $a_t$ and $b_t$ with $a_t<b_t$ such that
\begin{equation}\label{equ:general_pairwise_coprime}
\frac mn = \frac{a_1}{b_1} + \dots + \frac{a_{t-1}}{b_{t-1}} + \frac{a_t}{b_t} + \frac1{nb_1\cdots b_t},
\end{equation}
where $b_t$ is coprime to $\{b_j\}_{j=1}^{t-1}\cup \Omega$. By \Cref{prop:faithful_in_Q}, the decomposition \eqref{equ:general_pairwise_coprime} is faithful.
\end{proof}

\begin{proof}[Proof of \Cref{thm:ding}]
According to \Cref{prop:general_pairwise_coprime}, for $i=1,\dots,e$, let
\[
\frac{m_i}{n}=\frac{a_{i1}}{b_{i1}}+\dots + \frac{a_{it_i}}{b_{it_i}} + \frac{1}{n b_{i1}\cdots b_{it_i}}
\]
be a faithful decomposition of $\frac{m_i}{n}$, where $b_{ij}$ is coprime to $\{b_{il}\}_{l\neq j}\cup\big(\{n\}\cup\bigcup_{k\neq i}\{b_{kl}\}_{l=1}^{t_k}\big)$ for $1\le j\le t_i$. In other words, $\{\{b_{ij}\}_{j=1}^{t_i}\}_{i=1}^e$ are pairwise coprime and $n$ is coprime to $\prod_{i=1}^e\prod_{j=1}^{t_i} b_{ij}$. Write
\begin{equation}\label{equ:m/n=sum1/ni}
\frac mn = \sum_{i=1}^e\frac{m_i}{n}=\sum_{i=1}^e\Bigg(
\frac{a_{i1}}{b_{i1}}+\dots + \frac{a_{it_i}}{b_{it_i}} + \frac{1}{n b_{i1}\cdots b_{it_i}}
\Bigg)
=:\sum_{i=1}^{k_e}\frac1{n_i}.
\end{equation}
It is obvious that $S_e\supset T_e$.

We first consider the case $e=1$. Because
\[
\frac mn = \frac{a_{11}}{b_{11}} + \dots + \frac{a_{1t_1}}{b_{1t_1}} + \frac1{nb_{11}\cdots b_{1t_1}}
\]
is faithful, the partial sums $\sum_{i\in I}\frac{1}{n_i}$ lying in $\frac 1n\mathbb{Z}$ must be equal to $0$ or $\frac mn$, where $I\subset\{1,\dots,k_1\}$. Hence, $S_1\subset T_1$.

Suppose that $S_e\subset T_e$ holds for $e\le d-1$, where $d\ge2$ is an integer. Now consider the case $e=d$. 

According to \Cref{equ:m/n=sum1/ni}, choose the partial sums of the form
\[
\sum_{i=1}^{d}\Bigg(
\frac{c_{i1}}{b_{i1}}+\dots + \frac{c_{it_i}}{b_{it_i}} + \frac{\delta_i}{nb_{i1}\cdots b_{it_i}}
\Bigg)\in S_d,
\]
where $\delta_i=0$ or $1$, and $0\le c_{ij}\le a_{ij}$ for $1\le j\le t_i$. Let
\[
A=\sum_{i=1}^{d-1}\Bigg(
\sum_{j=1}^{t_i}\frac{c_{ij}}{b_{ij}} + \frac{\delta_i}{nb_{i1}\cdots b_{it_i}}
\Bigg),
\qquad
B=\sum_{j=1}^{t_d}\frac{c_{dj}}{b_{dj}} + \frac{\delta_d}{nb_{d1}\cdots b_{dt_d}},
\]
so that $A+B\in\frac 1n\mathbb Z$. Since both $A\prod_{i=1}^{d-1}\prod_{j=1}^{t_i}b_{ij}$ and $(A+B)\prod_{i=1}^{d-1}\prod_{j=1}^{t_i}b_{ij}$ lie in $\frac 1n\mathbb Z$, so does $B\prod_{i=1}^{d-1}\prod_{j=1}^{t_i}b_{ij}$. But $n\prod_{j=1}^{t_d} b_{dj}$ is coprime to $\prod_{i=1}^{d-1}\prod_{j=1}^{t_i}b_{ij}$, so $B\in\frac 1n\mathbb Z$. Since $A+B\in\frac 1n\mathbb Z$, so is $A$. Hence, $A\in S_{d-1}$.

According to the inductive assumption, we have $A\in T_{d-1}$. Since the decomposition of $\frac{m_d}{n}$ is faithful, then $B=0$ or $B=\frac{m_d}{n}$. Hence, we have $A+B\in T_d$ and $S_d\subset T_d$.

Therefore, we immediately obtain $S_e=T_e$. This completes the proof.
\end{proof}

\section{Proof of \Cref{thm:4/n}}

In this section, we first give two conclusions about faithful decomposition with three fractions, and use them to prove \Cref{thm:4/n}.

\begin{proposition}\label{thm:r/s_1n}
Suppose $(y,y_2)=1$, then
\begin{equation}\label{equ:r/s_1n}
\frac mn = \frac{1}{y_2} + \frac 1{y_1} + \frac{x}{yn}
\end{equation}
is faithful if and only if $x<y$ and $n\neq m'y_2$ for any positive integer $m' < m$.
\end{proposition}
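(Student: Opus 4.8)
The plan is to work straight from the definition of faithfulness applied to the three numerators $a_1=1$, $a_2=1$, $a_3=x$: an admissible partial sum is $v=\frac{\epsilon_1}{y_2}+\frac{\epsilon_2}{y_1}+\frac{x'}{yn}$ with $\epsilon_1,\epsilon_2\in\{0,1\}$ and $0\le x'\le x$, and faithfulness asserts that every such $v$ lying in $\frac1n\mathbb Z$ equals $0$ or $u=\frac mn$. First I would record the elementary reformulation that, since $\frac1{y_2}$ is one of three positive summands, $\frac1{y_2}<\frac mn$, i.e. $n<my_2$; hence if $y_2\mid n$ then $m'=n/y_2$ is a positive integer with $m'<m$. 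Consequently the hypothesis ``$n\neq m'y_2$ for every positive $m'<m$'' is equivalent to $y_2\nmid n$, which is the form I will actually use.

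For the ``only if'' direction I would exhibit explicit violating partial sums. If $y_2\mid n$, then $v=\frac1{y_2}=\frac{m'}{n}\in\frac1n\mathbb Z$ with $0<m'<m$, so $v\notin\{0,u\}$. If instead $x\ge y$, then $x'=y$ is admissible and yields $v=\frac{y}{yn}=\frac1n\in\frac1n\mathbb Z$, which differs from $0$ and from $u$ (the alternative $u=\frac1n$ would require $m=1$, but then the positive summand $\frac{x}{yn}<\frac1n$ already forces $x<y$, so that subcase is vacuous). In either case the decomposition is not faithful, giving necessity of both conditions.

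The substance is the ``if'' direction, and the key device is the complementation symmetry $v\mapsto u-v$. Since $u\in\frac1n\mathbb Z$, a partial sum $v$ lies in $\frac1n\mathbb Z$ exactly when its complement $u-v=\frac{1-\epsilon_1}{y_2}+\frac{1-\epsilon_2}{y_1}+\frac{x-x'}{yn}$ does, and the complement is again admissible. This reduces everything to the single claim: if $\epsilon_1=1$ and $v\in\frac1n\mathbb Z$ then $v=u$. When $\epsilon_2=1$ this is immediate, since $v=u-\frac{x-x'}{yn}$ lies in $\frac1n\mathbb Z$ iff $y\mid(x-x')$, which forces $x'=x$ because $0\le x-x'\le x<y$. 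When $\epsilon_2=0$ I clear denominators to write $nv=\frac{ny+x'y_2}{yy_2}$ and invoke $(y,y_2)=1$: by the Chinese remainder theorem integrality is equivalent to $y\mid x'$ together with $y_2\mid n$; the first gives $x'=0$ via $x<y$, and the second contradicts $y_2\nmid n$, so no such $v$ exists. Granting the claim, an arbitrary in-ideal $v$ with $\epsilon_1=0$ is disposed of by applying the claim to $u-v$ (whose first coordinate is $1$), yielding $u-v=u$ and hence $v=0$.

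I expect the main obstacle to be conceptual rather than computational: the middle term $\frac1{y_1}$ carries \emph{no} coprimality hypothesis, so a head-on analysis of partial sums that contain $\frac1{y_1}$ but not $\frac1{y_2}$ stalls. The complementation symmetry is precisely what recasts every such sum into the $\epsilon_1=1$ shape, where only $(y,y_2)=1$ and $y_2\nmid n$ are needed. The one bookkeeping point I would verify with care is that $u-v$ is genuinely admissible, i.e. that its numerators $1-\epsilon_i$ and $x-x'$ again lie in the prescribed ranges, so that the reduction is legitimate.
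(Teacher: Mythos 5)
Your proof is correct and follows essentially the same route as the paper's: both reduce the analysis to partial sums of the shapes $\frac{x'}{yn}$ and $\frac{1}{y_2}+\frac{x'}{yn}$ and exploit $(y,y_2)=1$ there to force $y\mid x'$ and $y_2\mid n$. The only real difference is that you make explicit the complementation $v\mapsto u-v$ that justifies ignoring the partial sums containing $\frac{1}{y_1}$, a reduction the paper's two-case analysis leaves implicit.
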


\begin{proof}
The direct implication is immediately obtained by \Cref{prop:not_div_and_less}.

For the converse implication, if \Cref{equ:r/s_1n} does not give a faithful decomposition of $\frac mn$, then there exist a postive integer $m'<m$ and an integer $x'$ such that 

\textsc{Case} 1. 
\[
\frac{m'}{n} = \frac{x'}{yn}, 
\qquad 1\le x'\le x.
\]
So $m'=\frac{x'}{y}\in\mathbb Z$, thus $y\le x'\le x$. A contradiction.

\textsc{Case} 2.
\[
\frac{m'}{n} = \frac{1}{y_2} + \frac{x'}{yn},
\qquad 0\le x'\le x.
\]
So $y_2(m'y-x')=yn$. Because $(y,y_2)=1$, we have $y_2\mid n$ and then $\frac{x'}{y}=m'-\frac{n}{y_2}\in \mathbb Z$. Therefore, we obtain either $y\le x'\le x$, or $x'=0$ which implies $n=m'y_2$, a contradiction. 
\end{proof}

\begin{proposition}\label{cor:m/n_faithful_iff}
Let $m\ge3$, $\frac mn$ be a irreducible proper fraction. Take $r\equiv -2n\pmod m$ with $0< r < m$. 
\begin{enumerate}[(1)]
\item When $2n+r\equiv m\pmod{2m}$, the decomposition
\begin{equation}\label{equ:equiv_m_mod_2m}
\frac mn = \frac{1}{\frac{2n+r+m}{2m}} + \frac{1}{\frac{2n+r+m}{2m}\frac{2n+r}{m}} + \frac{r}{\frac{2n+r}{m}n}
\end{equation}
is faithful if and only if $n>\frac{r(m-1)}{2}$ and $n\neq \frac{m'(r+m )}{2(m-m')}$ for any $\frac m2 < m' < m$.  \label{enu:equiv_m_mod_2m}

\item When $2n+r\equiv 0\pmod{2m}$, the decomposition
\begin{equation}\label{equ:equiv_0_mod_2m}
\frac mn = \frac{1}{\frac{2n+r+2m}{2m}} + \frac{1}{\frac{2n+r+2m}{2m}\frac{2n+r}{2m}} + \frac{r/2}{\frac{2n+r}{2m}n}
\end{equation}
is faithful if and only if $n>\frac{r(m-1)}{2}$ and $n\neq \frac{m'(r+2m)}{2(m-m')}$ for any $\frac 25m < m' < m$. \label{enu:equiv_0_mod_2m}
\end{enumerate}
\end{proposition}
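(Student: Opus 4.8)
The plan is to reduce both parts to the three‑fraction criterion of \Cref{thm:r/s_1n}. The key observation is that the congruence $r\equiv-2n\pmod m$ makes $s:=\frac{2n+r}{m}$ a positive integer, and that the two hypotheses are exactly the parity of $s$: the condition $2n+r\equiv m\pmod{2m}$ in \eqref{equ:equiv_m_mod_2m} says $s$ is odd, while $2n+r\equiv0\pmod{2m}$ in \eqref{equ:equiv_0_mod_2m} says $s$ is even (and forces $r$ even, so that $r/2\in\mathbb Z$). In each case the displayed decomposition is matched to the shape $\frac mn=\frac1{y_2}+\frac1{y_1}+\frac{x}{yn}$ of \Cref{thm:r/s_1n}, after which faithfulness is read off from the criterion there and the two resulting inequalities are rearranged into the stated form.

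First I would treat \eqref{equ:equiv_m_mod_2m}. I set $y_2=\frac{s+1}{2}$, $y=s$, $x=r$ and $y_1=y_2 s$, so the three denominators $\frac{2n+r+m}{2m}$, $\frac{2n+r+m}{2m}\cdot\frac{2n+r}{m}$, $\frac{2n+r}{m}\,n$ become $y_2$, $y_1$, $yn$. A short computation using $\frac1{y_2}+\frac1{y_2 s}=\frac{s+1}{y_2 s}=\frac2s$ confirms the right‑hand side sums to $\frac{2n+r}{sn}=\frac mn$, so this is a genuine decomposition into proper fractions with distinct denominators; the coprimality hypothesis $(y,y_2)=\bigl(s,\tfrac{s+1}{2}\bigr)=1$ holds because $s$ is odd and $\gcd(s,s+1)=1$. \Cref{thm:r/s_1n} then makes the decomposition faithful exactly when $x<y$ and $n\neq m'y_2$ for every integer $1\le m'<m$. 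The first condition $r<s$ rearranges to $r(m-1)<2n$, i.e.\ $n>\frac{r(m-1)}{2}$; the second, written with $y_2=\frac{2n+r+m}{2m}$, becomes $n\neq\frac{m'(r+m)}{2(m-m')}$ upon solving $n=m'y_2$ for $n$.

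The remaining point, which I expect to be the main obstacle, is to explain why only the restricted range $\frac m2<m'<m$ matters rather than all $1\le m'<m$. Here I would invoke that $\frac mn$ is proper, so $n>m$: for $m'\le\frac m2$ one has $\frac{m'}{m-m'}\le1$, whence $\frac{m'(r+m)}{2(m-m')}\le\frac{r+m}{2}<m<n$ using $r<m$, so the forbidden equality cannot occur and such small $m'$ impose no condition. This is precisely what trims the family of excluded values down to $\frac m2<m'<m$.

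Part \eqref{equ:equiv_0_mod_2m} is entirely analogous under $s=2s'$ with $s'=\frac{2n+r}{2m}$: I set $y_2=s'+1$, $y=s'$, $x=r/2$ and $y_1=y_2 s'$, note $(s',s'+1)=1$, and check $\frac1{y_2}+\frac1{y_1}=\frac1{s'}$ so the sum is $\frac{2n+r}{2s'n}=\frac mn$. The condition $x<y$ again gives $n>\frac{r(m-1)}{2}$, and $n\neq m'y_2$ becomes $n\neq\frac{m'(r+2m)}{2(m-m')}$. The only difference is in the range reduction: the numerator now carries $r+2m$ instead of $r+m$, so the clean sufficient bound guaranteeing $\frac{m'(r+2m)}{2(m-m')}<m<n$ becomes $\frac{m'}{m-m'}\le\frac23$, i.e.\ $m'\le\frac25m$, which is exactly why the excluded values are confined to $\frac25m<m'<m$.
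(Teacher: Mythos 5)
Your proposal is correct and follows essentially the same route as the paper: both reduce each case to the criterion of \Cref{thm:r/s_1n} by identifying $y_2$, $y$, $x$, verifying $(y,y_2)=1$, and translating $x<y$ and $n\neq m'y_2$ into the stated inequalities, with the restriction of $m'$ to $\bigl(\frac m2,m\bigr)$ resp.\ $\bigl(\frac25m,m\bigr)$ obtained from $r<m<n$ (you argue the contrapositive of the paper's estimate, which is equivalent). Your write-up is in fact slightly more complete, since you also verify explicitly that the displayed sums equal $\frac mn$.
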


\begin{proof}
It is obvious that $r<\frac{2n+r}{m}$, $\frac r2<\frac{2n+r}{2m}$ and $n>\frac{r(m-1)}{2}$ are equivalent.

(\ref{enu:equiv_m_mod_2m}) It is easy to check that $\big(\frac{2n+r+m}{2m},\frac{2n+r}{m}\big)=1$ in \Cref{equ:equiv_m_mod_2m}.
If there is $0<m'<m$ such that $n=m'\frac{2n+r+m}{2m}$, then $2(m-m')n=m'(r+m)$. Notice that $r<m<n$, so $2(m-m')m < 2(m-m')n = m'(r+m) < 2mm'$, thus $\frac m2 < m' < m$. 

Therefore, \Cref{equ:equiv_m_mod_2m} is a faithful decomposition by \Cref{thm:r/s_1n}. 

(\ref{enu:equiv_0_mod_2m}) It is easy to check that $2\mid r$ and $\big(\frac{2n+r+2m}{2m},\frac{2n+r}{2m}\big)=1$ in \Cref{equ:equiv_0_mod_2m}.
If there is $0<m'<m$ such that $n=m'\frac{2n+r+2m}{2m}$, then $\frac 25 m < m' < m$. 

Therefore, \Cref{equ:equiv_m_mod_2m} is a faithful decomposition by \Cref{thm:r/s_1n}. 
\end{proof}

Finally, we give a proof of \Cref{thm:4/n}.

\begin{proof}[Proof of \Cref{thm:4/n}]
Because $n>4$ is odd, so $-2n\equiv2\pmod 4$, and then $r=2$, satisfying $n>\frac{2(4-1)}{2}=3$. 

\textsc{Case} 1. If $n\equiv 1\pmod{4}$, then $2n+2\equiv4\pmod8$. By \Cref{equ:equiv_m_mod_2m} and \Cref{cor:m/n_faithful_iff} (\ref{enu:equiv_m_mod_2m}),
\[
\frac 4n = \frac{1}{\frac{n+3}{4}} + \frac{1}{\frac{n+3}{4}\frac{n+1}{2}} + \frac{2}{\frac{n+1}{2}n}
\]
is faithful when $n\neq\frac{3(2+4)}{2(4-3)}=9$. The faithful decomposition of $\frac 49$ is given in \Cref{exp:some_special_example}. 

\textsc{Case} 2. If $n\equiv 3\pmod{4}$, then $2n+2\equiv0\pmod8$. By \Cref{equ:equiv_0_mod_2m} and \Cref{cor:m/n_faithful_iff} (\ref{enu:equiv_0_mod_2m}),
\[
\frac 4n = \frac{1}{\frac{n+5}{4}} + \frac{1}{\frac{n+5}{4}\frac{n+1}{4}} + \frac{1}{\frac{n+1}{4}n}
\]
is faithful when $n\neq\frac{2(2+8)}{2(4-2)}=5$ and $n\neq\frac{3(2+8)}{2(4-3)}=15$. Because the faithful decom\-po\-si\-tion of $\frac 45$ is given in \textsc{Case} 1, $\frac{4}{15}$ also has a faithful decomposition form \Cref{equ:4/n} by \Cref{prop:div_p}.
\end{proof}

\begin{remark}
Using \Cref{cor:m/n_faithful_iff}, the proof process of \Cref{thm:4/n} can be used to prove the cases of $\frac 3n$ and $\frac 5n$.
\end{remark}

\section*{Acknowledgments}

We would like to thank Yuqing He, and all the anonymous referees for helpful comments. Pingzhi Yuan was supported by the National Natural Science Foundation of China (Grant No. 12171163) and the Basic and Applied Basic Research Foundation of Guangdong Province (Grant No. 2024A1515010589). Hongjian Li was supported by the Project of Guangdong University of Foreign Studies (Grant No. 2024RC063).

\bibliographystyle{plain}
\bibliography{refs}
\end{document}